\theoremstyle{plain}
\newtheorem{prop}{Утверждение}[section]
\newtheorem{theor}{Теорема}[section]
\numberwithin{equation}{section}
\numberwithin{subsection}{section}
\newcommand{\Wo}{{\raisebox{0.2ex}{\(\stackrel{\circ}{W}\)}}{}}
\newcommand{\ind}{\operatorname{ind}}
\newcommand{\Let}{\operatorname{Let}}
\title{Спектральные и осцилляционные свойства одного линейного пучка
дифференциальных операторов четвёртого порядка}
\author{Ж.~Бен Амара, А.\,А.~Владимиров, А.\,А.~Шкаликов}
\thanks{Работа выполнена при поддержке Российского фонда фундаментальных
исследований (грант \No~10-01-00423).}
\begin{document}
\renewcommand{\proofname}{{\upshape Д\,о\,к\,а\,з\,а\,т\,е\,л\,ь\,с\,т\,в\,о.}}
\begin{flushleft}
\normalsize УДК~517.984
\end{flushleft}
\begin{abstract}
Статья посвящается изучению спектральных и осцилляционных свойств линейного
операторного пучка \(A-\lambda B\), где коэффициент \(A\) отвечает дифференциальному
выражению \((py'')''\), а коэффициент \(B\) "--- дифференциальному выражению
\(-y''+cry\). В частности, устанавливается, что все отрицательные собственные
значения пучка являются простыми, а число нулей отвечающих им собственных функций
при некоторых дополнительных условиях связано с порядковым номером соответствующего
собственного значения.
\end{abstract}
\maketitle
\markboth{}{}

\section{Введение}\label{par:1}
\subsection{}\label{pt:1}
Рассмотрим спектральную задачу, отвечающую дифференциальному уравнению
\begin{gather}\label{eq:1:1}
	(py'')''-\lambda(-y''+cry)=0\\
	\intertext{и какому-либо из наборов граничных условий}\label{eq:1:2}
	y(0)=y'(0)=y(1)=y'(1)=0\\ \intertext{или} \label{eq:1:3}
	y(0)=y'(0)=y'(1)=(py'')'(1)+\lambda\alpha y(1)=0.
\end{gather}
Коэффициенты \(p,r\in C[0,1]\) предполагаются здесь равномерно положительными,
а физические параметры \(c\) и \(\alpha\) "--- вещественными. Решения \(y\)
предполагаются удовлетворяющими, помимо сформулированных граничных условий,
естественным ограничениям \(y\in C^2[0,1]\) и \(py''\in C^2[0,1]\).

Граничные задачи указанного вида возникают, в частности, в теории упругости,
описывая движение частично закреплённого стержня с сосредоточенной на свободном
конце дополнительной массой. Случай \(c\neq 0\) отвечает при этом наличию трения
стержня о текущую жидкость "--- например, "`течению"' стекла или пластика
по твёрдой подложке (см. \cite{He:1980}, \cite{GP:1997}). Другие примеры
механических приложений могут быть найдены в монографии \cite{Co:1948}.
В качестве гидродинамической интерпретации уравнения \eqref{eq:1:1} может быть
рассмотрено также хорошо известное одномерное уравнение Орра--Зоммерфельда без
мнимого промежуточного члена (см., например, \cite{12}, \cite{6}, \cite{8},
\cite{GP:1997}), возникающее в линеаризованной теории устойчивости течения
вязкой несжимаемой жидкости под действием силы тяжести.

\subsection{}
Граничные задачи, допускающие в операторной форме запись \(Ay-\lambda By=0\),
где \(A\) и \(B\) суть регулярные дифференциальные операторы порядков,
соответственно, \(n\) и \(m\) (при \(n>m\)), подвергались изучению, например,
в работах \cite{15}, \cite{16}, \cite{6} и \cite{9}. Однако в перечисленных
публикациях внимание было направлено на вопросы, связанные с полнотой систем
собственных и присоединённых функций пучка \(A-\lambda B\). В отличие от них,
основной целью нашего исследования является изучение вопросов, связанных
с простотой собственных значений рассматриваемой задачи, а также осцилляционными
свойствами её собственных функций. В этом смысле настоящая статья примыкает
к тематике работ \cite{GK:1950}--\cite{El:1978}. Следует сразу отметить,
что задача об изучении осцилляционных свойств собственных функций пучков
дифференциальных операторов четвёртого порядка является существенно более сложной,
чем аналогичная задача для операторов Штурма--Лиувилля.

Методологическую основу проводимого исследования составляют общие вариационные
принципы для линейных пучков самосопряжённых операторов (см., например,
\cite{LSY}) и теория знакорегулярных операторов в пространствах непрерывных
функций (см., например, \cite{GK:1950}, \cite{LS:1976}, \cite{BP:1994},
\cite{Vl:2009}). Основные результаты содержатся в пункте~\ref{par:3}.

\subsection{}\label{pt:1.2}
Введём в рассмотрение гильбертово пространство
\(\mathfrak H\) вида
\begin{equation}\label{eq:h1}
	\mathfrak H\rightleftharpoons\Wo_2^2[0,1]
\end{equation}
в случае граничных условий \eqref{eq:1:2}, и
\begin{equation}\label{eq:h2}
	\mathfrak H\rightleftharpoons\{y\in W_2^2[0,1]\::\:y(0)=y'(0)=y'(1)=0\}
\end{equation}
в случае граничных условий \eqref{eq:1:3}. Пространство \(\mathfrak H\)
естественным образом вложено в \(L_2[0,1]\), что позволяет ввести в рассмотрение
\cite[Дополнение~1, \S~2]{BSh:1983} оснащение
\[
	\mathfrak H\hookrightarrow L_2[0,1]\hookrightarrow\mathfrak H^*,
\]
где \(\mathfrak H^*\) "--- сопряжённое к \(\mathfrak H\) пространство полулинейных
функционалов. Обозначим через \(I:\mathfrak H\to L_2[0,1]\) и \(I^*:L_2[0,1]\to
\mathfrak H^*\) соответствующие операторы вложения, а через \(J:\mathfrak H^*\to
\mathfrak H\) "--- изометрию
\[
	(\forall y\in\mathfrak H^*)\,(\forall z\in\mathfrak H)\qquad
		\langle Jy,z\rangle_{\mathfrak H}=\langle y,z\rangle,
\]
существование которой гарантируется теоремой Рисса \cite[Гл.~\mbox{V}, \S~1]{Kato}
о представлении функционала в гильбертовом пространстве. Введём также в рассмотрение
линейный пучок \(T:\mathbb C\to\mathcal B(\mathfrak H,\mathfrak H^*)\), операторы
которого действуют согласно правилу
\begin{equation}\label{eq:qf}
	\langle T(\lambda)y,z\rangle\rightleftharpoons
		\int\limits_0^1\bigl[p\,y''\overline{z''}-\lambda\,(y'\overline{z'}
		+cr\,y\overline{z})\bigr]\,dx-\lambda\alpha\,y(1)\overline{z(1)}.
\end{equation}
Имеет место следующий простой факт (см., например, \cite{NSh1999} и \cite{Vl2004}):

\begin{prop}\label{prop:pochast}
При любом выборе значения \(\lambda\in\mathbb C\) ядро оператора \(T(\lambda)\)
в точности совпадает с множеством решений исходной граничной задачи
\eqref{eq:1:1}, \eqref{eq:1:2} или \eqref{eq:1:1}, \eqref{eq:1:3}.
\end{prop}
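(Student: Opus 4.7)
The plan is to prove both inclusions by systematic integration by parts in the sesquilinear form~\eqref{eq:qf}, leveraging the fact that a substantial part of the boundary conditions is already encoded in~$\mathfrak H$ itself. In both variants the conditions $y(0)=y'(0)=0$, together with $y(1)=y'(1)=0$ for~\eqref{eq:1:2} and $y'(1)=0$ for~\eqref{eq:1:3}, come for free as soon as $y\in\mathfrak H$; hence only the differential equation~\eqref{eq:1:1} and, in case~\eqref{eq:1:3}, the dynamic condition $(py'')'(1)+\lambda\alpha y(1)=0$ remain to be extracted from membership in $\ker T(\lambda)$.

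I would begin by testing the identity $\langle T(\lambda)y,z\rangle=0$ against arbitrary $z\in C_0^\infty(0,1)\subset\mathfrak H$. Using the standing regularity $y\in C^2[0,1]$ and $py''\in C^2[0,1]$, two integrations by parts in the $py''\overline{z''}$ term and one in the $y'\overline{z'}$ term convert the identity into
\[
 \int\limits_0^1\bigl[(py'')''-\lambda(-y''+cry)\bigr]\,\overline{z}\,dx=0,
\]
valid for every compactly supported smooth $z$. By the du~Bois--Reymond lemma this is precisely~\eqref{eq:1:1} in the classical sense. Next I would enlarge the class of admissible test functions up to the whole of $\mathfrak H$ and carefully track the surface contributions. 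In case~\eqref{eq:1:2} the conditions $z(1)=z'(1)=0$ annihilate every boundary term and nothing further survives. In case~\eqref{eq:1:3} only $z(0)=z'(0)=z'(1)=0$ are available, so that $z(1)$ remains free; after using \eqref{eq:1:1} to cancel the bulk integral, exactly the equality $[(py'')'(1)+\lambda\alpha y(1)]\,\overline{z(1)}=0$ is left, which by arbitrariness of $z(1)$ delivers the missing dynamic boundary condition.

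The reverse inclusion is a direct computation running the same identities in the opposite order: for a classical solution $y$ of the boundary-value problem every surface term produced by integration by parts is annihilated either by a condition on $y$ or by one on~$z$, so that $\langle T(\lambda)y,z\rangle$ collapses to zero for every $z\in\mathfrak H$. I expect the only delicate point to be the bookkeeping of boundary contributions in case~\eqref{eq:1:3}: one must check that no spurious natural condition (for instance, $(py'')(1)=0$) is forced on $y$ by the variational setup. This is guaranteed, as the calculation shows, precisely by the form in which \eqref{eq:qf} has been written, with the surface summand $-\lambda\alpha y(1)\overline{z(1)}$ and with $y'\overline{z'}$ (rather than $-y''\overline{z}$) under the integral sign.
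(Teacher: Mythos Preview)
Your argument has a genuine gap in the direction $\ker T(\lambda)\subset\{\text{classical solutions}\}$. An element $y\in\ker T(\lambda)$ is a priori only known to lie in $\mathfrak H\subset W_2^2[0,1]$; the regularity $y\in C^2[0,1]$, $py''\in C^2[0,1]$ is part of the \emph{definition} of a classical solution (see~\ref{pt:1}) and therefore has to be \emph{proved}, not assumed. You invoke it as ``standing regularity'' in order to perform two integrations by parts on $\int_0^1 py''\,\overline{z''}\,dx$, but at that stage $py''$ is merely an $L_2$ function, so $(py'')'$ need not exist classically and the manipulation is illegitimate as written. In your opening summary you list what ``remains to be extracted'' from membership in the kernel and omit precisely this regularity; that is the substantive content of the proposition.

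The paper handles this by a different device: it introduces the single $L_2$ function
\[
 w\;=\;py''+\lambda y-\lambda c\int_0^x ry\,(x-t)\,dt
\]
and shows, via~\eqref{eq:qf}, that $\int_0^1 w\,\overline{z''}\,dx=0$ for all $z\in\Wo_2^2[0,1]$, hence $w$ is linear. From $py''=w-\lambda y+\lambda c F$ with $F\in C^2$ one reads off $py''\in C^1$, then $y''\in C$, then $y\in C^2$, and finally $py''\in C^2$ together with~\eqref{eq:1:1} by two differentiations. Your strategy of testing against $C_0^\infty(0,1)$ \emph{can} be repaired along similar lines---interpret $(py'')''=\lambda(-y''+cry)$ first as an identity in $\mathcal D'(0,1)$, note that the right-hand side is in $L_2$, conclude $py''\in W_2^2\hookrightarrow C^1$, and bootstrap---but this step must be made explicit. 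The remaining parts of your proposal (recovering the dynamic condition in case~\eqref{eq:1:3} by freeing $z(1)$, and the converse inclusion) are correct and match the paper.
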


\begin{proof}
Установление факта принадлежности всякого решения \(y\) исходной задачи
пространству \(\mathfrak H\) и обращения при этом функционала \(T(\lambda)y\)
в тождественный нуль не вызывает затруднений. Пусть теперь \(y\in\mathfrak H\)
и \(T(\lambda)y=0\). Введём в рассмотрение функцию
\[
	w\rightleftharpoons py''+\lambda y-\lambda c\int\limits_0^x ry\cdot
		(x-t)\,dt.
\]
Выводимое из \eqref{eq:qf} интегрированием по частям тождество
\[
	(\forall z\in\Wo_2^2[0,1])\qquad\int\limits_0^1 w\overline{z''}\,dx=0
\]
означает, что функция \(w\) является линейной. Последнее, в свою очередь,
немедленно влечёт справедливость соотношений \(y\in C^2[0,1]\), \(py''\in
C^2[0,1]\) и равенства \eqref{eq:1:1}. Выполнение равенства
\[
	(py'')'(1)+\lambda\alpha y(1)=0
\]
в случае \eqref{eq:h2} проверяется интегрированием определения \eqref{eq:qf}
по частям для \(z\rightleftharpoons 2x^3-3x^2\).
\end{proof}

Утверждение~\ref{prop:pochast} позволяет рассматривать пучок \(T\) в качестве
адекватной операторной модели исходной граничной задачи, что и будет делаться
нами в дальнейшем.

\begin{prop}\label{prop:0}
Спектр пучка \(T\) является вещественным, дискретным и полупростым.
\end{prop}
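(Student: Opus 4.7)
The plan is to realise $T(\lambda)=A-\lambda B$ as a pencil of bounded self-adjoint operators from $\mathfrak H$ to $\mathfrak H^*$, where
\[
\langle Ay,z\rangle=\int_0^1 p\,y''\overline{z''}\,dx,\qquad
\langle By,z\rangle=\int_0^1(y'\overline{z'}+cr\,y\overline{z})\,dx+\alpha\,y(1)\overline{z(1)},
\]
and then to reduce the spectral problem for $T$ to that of a compact self-adjoint operator on $\mathfrak H$. Both forms are Hermitian, because $p$, $r$, $c$, $\alpha$ are real, so $\widetilde A:=JA$ and $\widetilde B:=JB$ are self-adjoint on $\mathfrak H$ in the inner product $\langle\cdot,\cdot\rangle_{\mathfrak H}$.

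The next step is to verify two structural properties. First, uniform positivity of $p$, together with the boundary conditions encoded in the definition of $\mathfrak H$, makes $\int_0^1 p(y'')^2\,dx$ equivalent to $\|y\|^2_{\mathfrak H}$; hence $A\colon\mathfrak H\to\mathfrak H^*$ is coercive and, by the Lax--Milgram lemma, a homeomorphism, so $\widetilde A$ is a positive, boundedly invertible operator on $\mathfrak H$. Second, $\langle By,z\rangle$ depends only on $y'$, $z'$ and the traces $y(1)$, $z(1)$, so $B$ factors through the natural embedding $\mathfrak H\hookrightarrow W_2^1[0,1]$, which is compact by Rellich's theorem (the boundary term contributes a rank-one summand through the continuous trace $y\mapsto y(1)$ on $W_2^1[0,1]$). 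Consequently $\widetilde B$ is compact.

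Having these, the substitution $u=\widetilde A^{1/2}y$ transforms the equation $T(\lambda)y=0$, i.e.\ $\widetilde Ay=\lambda\widetilde By$, into $(I-\lambda K)u=0$ with
\[
K:=\widetilde A^{-1/2}\widetilde B\widetilde A^{-1/2},
\]
a compact self-adjoint operator on $\mathfrak H$. The spectral theorem then yields that the spectrum of $K$ is real and discrete (with $0$ as the only possible accumulation point) and consists of semisimple eigenvalues admitting an orthonormal eigenbasis. The eigenvalues of $T$ are precisely the reciprocals of the non-zero eigenvalues of $K$, with eigenspaces carried across by the isomorphism $\widetilde A^{-1/2}$; since $\widetilde A$ is invertible, $\lambda=0$ lies in the resolvent set. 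Fredholm theory for $I-\lambda K$ shows that $T(\lambda)$ is invertible as soon as it is injective, so the whole spectrum of $T$ consists of such eigenvalues.

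The one thing to watch is translating ``semisimplicity'' from operator language to pencil language: the absence of adjoined (root) functions of $T$ at an eigenvalue $\lambda_0$ is equivalent to the absence of a Jordan chain of $K$ at $1/\lambda_0$, which a compact self-adjoint operator never admits. Thus the only genuinely analytic input is the Rellich compactness of $\mathfrak H\hookrightarrow W_2^1[0,1]$; everything else is bookkeeping around the form identification and the standard reduction to a compact self-adjoint operator.
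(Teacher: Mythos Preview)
Your proof is correct and follows essentially the same route as the paper's: writing $T(\lambda)=T(0)+\lambda\,dT/d\lambda$, noting that $JT(0)$ is uniformly positive and $J(dT/d\lambda)$ is compact self-adjoint, and reducing to the compact self-adjoint operator $K=R=-[JT(0)]^{-1/2}J(dT/d\lambda)[JT(0)]^{-1/2}$, with the absence of Jordan chains for $T$ deduced from that for $K$. The only difference is that you spell out the coercivity of $A$ and the Rellich compactness behind $B$ more explicitly than the paper does.
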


\begin{proof}
Независимо от выбора значения \(\lambda\in\mathbb C\) ограниченная обратимость
оператора \(T(\lambda)\) с очевидностью равносильна ограниченной обратимости
оператора
\[
	JT(0)+\lambda\,J(dT/d\lambda):\mathfrak H\to\mathfrak H.
\]
При этом, как следует из представления \eqref{eq:qf}, оператор \(JT(0)\) является
равномерно положительным, а оператор \(J(dT/d\lambda)\) "--- самосопряжённым
и вполне непрерывным. Соответственно, значение \(\lambda=0\) принадлежит
резольвентному множеству пучка \(T\), а произвольное \(\lambda\neq 0\) попадает
в его спектр в том и только том случае, когда \(\lambda^{-1}\) принадлежит спектру
самосопряжённого вполне непрерывного оператора \(R\rightleftharpoons-[JT(0)]^{-1/2}
J(dT/d\lambda)[JT(0)]^{-1/2}\). Учёт того обстоятельства, что разрешимость уравнения
\[
	T(\lambda)y=(dT/d\lambda)z
\]
для произвольно фиксированного вектора \(z\in\ker T(\lambda)\setminus\{0\}\)
приводила бы к существованию присоединённых векторов у самосопряжённого оператора
\(R\), завершает доказательство.
\end{proof}

\subsection{}
Укажем на некоторые понятия и постановки задач, в том или ином смысле примыкающие
к задаче о спектральных свойствах пучка \(T\).

Во-первых, с пучком \(T\) могут быть связаны пучки замкнутых неограниченных
операторов \(T(\lambda)I^{-1}{I^*}^{-1}:\mathfrak H^*\to\mathfrak H^*\)
и \({I^*}^{-1}T(\lambda)I^{-1}:L_2[0,1]\to L_2[0,1]\). Из утверждения
\ref{prop:pochast} легко выводится, что спектр этих пучков (понимаемый
как множество значений параметра \(\lambda\in\mathbb C\), при которых значение
пучка не имеет ограниченного обратного), в точности совпадает со спектром
пучка \(T\). Тем самым, указанные переформулировки не привносят в задачу
дополнительного содержания.

Во-вторых, сделанный нами выбор трактовки задачи позволяет легко распространить
получаемые результаты на ту существенно более общую ситуацию, когда коэффициент
\(p\) представляет собой произвольную равномерно положительную функцию класса
\(L_{\infty}[0,1]\), а коэффициент \(r\) "--- произвольную знакоопределённую
обобщённую функцию класса \(W_2^{-1}[0,1]\). В настоящей статье, однако,
мы не станем заниматься этой проблематикой.

\subsection{}
Через \(\ind L\) на всём протяжении статьи обозначается отрицательный индекс
инерции квадратичной формы оператора \(L\), то есть точная верхняя грань
размерностей подпространств \(\mathfrak M\subseteq\mathfrak H\), удовлетворяющих
условию
\[
	(\exists\varepsilon>0)\,(\forall y\in\mathfrak M)\qquad
	\langle Ly,y\rangle\leqslant-\varepsilon\,\|y\|^2.
\]


\section{Модельная задача и допустимое множество}\label{par:2}
\subsection{}
Имеют место следующие два факта, фигурирующие в работе \cite{LN} как лемма~2.1
и лемма~2.2, соответственно:

\begin{prop}\label{lem:2:1}
Для любых равномерно положительных функций \(p,r\in C[0,1]\) любое
нетривиальное решение уравнения
\begin{equation}\label{eq:2:1}
	(py'')''-ry=0,
\end{equation}
удовлетворяющее при некотором \(a\in [0,1)\) условиям
\begin{align*}
	y(a)&\geqslant 0,&y'(a)&\geqslant 0,&y''(a)&\geqslant 0,&
	(py'')'(a)&\geqslant 0,\\ \intertext{удовлетворяет также неравенствам}
	y(1)&>0,& y'(1)&>0,& y''(1)&>0,& (py'')'(1)&>0.
\end{align*}
\end{prop}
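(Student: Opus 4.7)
The plan is to recast \eqref{eq:2:1} as a first-order linear system on $\mathbb{R}^4$ whose coefficient matrix is entrywise non-negative and whose directed dependency graph is a single $4$-cycle, and then to invoke the standard positivity/propagation theory for cooperative irreducible systems. Concretely, I would introduce $u = (u_1, u_2, u_3, u_4) \rightleftharpoons (y, y', y'', (py'')')$; with respect to this vector, \eqref{eq:2:1} becomes
\[
    u_1' = u_2, \qquad u_2' = u_3, \qquad u_3' = u_4/p, \qquad u_4' = r\,u_1,
\]
i.e.\ a linear system $u' = A(x)u$ with $A(x) \geq 0$ entrywise, thanks to $p, r > 0$. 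The hypothesis reads $u(a) \in K \rightleftharpoons \{v \in \mathbb{R}^4 : v_i \geq 0\}$, and nontriviality combined with the uniqueness of Cauchy data rules out $u(a) = 0$.

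The first (and main) step is to show that the flow preserves $K$ throughout $[a, 1]$. I would use the standard perturbation trick: for each $\varepsilon > 0$, consider the solution $u_\varepsilon$ of $u_\varepsilon' = A u_\varepsilon + \varepsilon (1, 1, 1, 1)^T$ with $u_\varepsilon(a) = u(a)$. At any first point $\tau > a$ where some coordinate $u_{\varepsilon, j}$ could touch zero one would compute $u_{\varepsilon, j}'(\tau) \geq \varepsilon > 0$, contradicting the decrease; hence $u_\varepsilon(x) > 0$ componentwise on $(a, 1]$, and letting $\varepsilon \to 0$ yields $u(x) \in K$ throughout $[a, 1]$. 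Consequently $u_i' \geq 0$ on $[a, 1]$ for every $i$, so each coordinate is non-decreasing and satisfies $u_i(x) \geq u_i(a) \geq 0$.

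To upgrade these inequalities to strict ones at $x = 1$, I would argue by contradiction. If $u_j(1) = 0$ for some $j$, then by monotonicity $u_j$ vanishes identically on $[a, 1]$, whence $u_j' \equiv 0$ and, by the corresponding equation of the system (in which the coefficient of the partner coordinate is strictly positive), that partner coordinate also vanishes identically. Tracing around the cycle $u_1 \to u_2 \to u_3 \to u_4 \to u_1$, at most four iterations yield $u \equiv 0$ on $[a, 1]$, whence $y \equiv 0$ on $[0, 1]$ by uniqueness of Cauchy data -- contradicting nontriviality. Apart from the perturbation argument for cone invariance, which is the only delicate point, the remainder of the proof is a direct combinatorial trace through the irreducible cycle, using only the positivity of $p$ and $r$.
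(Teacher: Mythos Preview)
The paper does not actually prove this proposition; it simply quotes it from Leighton and Nehari \cite{LN} (their Lemma~2.1), whose original argument proceeds by the same repeated-integration / monotone-propagation idea that you package as invariance of the nonnegative orthant under a cooperative irreducible linear system. So the overall strategy is sound and indeed classical.

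There is, however, a slip in your reduction. With the choice $u_3 = y''$ one does \emph{not} obtain $u_3' = u_4/p$. Since only $p \in C[0,1]$ is assumed (the natural regularity being $y \in C^2$ together with $py'' \in C^2$), the derivative $y'''$ need not exist; and even when $p \in C^1$, one computes
\[
    u_3' = y''' = \dfrac{(py'')' - p'\,y''}{p} = \dfrac{u_4}{p} - \dfrac{p'}{p}\,u_3,
\]
so the coefficient matrix acquires the diagonal entry $-p'/p$, which may be negative and destroys the entrywise nonnegativity your argument relies on. The fix is to take $u_3 \rightleftharpoons p\,y''$ instead: then $u_2' = u_3/p$, $u_3' = u_4$, the matrix is again nonnegative with a single $4$-cycle, and since $p$ is uniformly positive the hypothesis $y''(a) \geq 0$ and the conclusion $y''(1) > 0$ translate directly to the corresponding statements for $u_3$. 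With this correction your perturbation argument and the cycle-chasing for strict positivity go through unchanged.
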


\begin{prop}\label{lem:2:2}
Для любых равномерно положительных функций \(p,r\in C[0,1]\) любое
нетривиальное решение уравнения \eqref{eq:2:1}, удовлетворяющее при некотором
\(a\in (0,1]\) условиям
\begin{align*}
	y(a)&\geqslant 0,&y'(a)&\leqslant 0,&y''(a)&\geqslant 0,&
	(py'')'(a)&\leqslant 0,\\ \intertext{удовлетворяет также неравенствам}
	y(0)&>0,& y'(0)&<0,& y''(0)&>0,& (py'')'(0)&<0.
\end{align*}
\end{prop}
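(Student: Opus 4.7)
\medskip
\noindent\textbf{План доказательства.} Предлагаемая стратегия состоит в том, чтобы свести доказываемое утверждение к уже сформулированному утверждению~\ref{lem:2:1} посредством отражения $x\mapsto 1-x$. А именно, я намерен положить $\tilde{y}(x)\rightleftharpoons y(1-x)$, $\tilde{p}(x)\rightleftharpoons p(1-x)$, $\tilde{r}(x)\rightleftharpoons r(1-x)$ и прямым вычислением проверить, что $\tilde{y}$ удовлетворяет уравнению $(\tilde{p}\tilde{y}'')''-\tilde{r}\tilde{y}=0$ на отрезке $[0,1]$ с равномерно положительными коэффициентами $\tilde{p},\tilde{r}\in C[0,1]$.

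Второй шаг состоит в аккуратном пересчёте граничных значений при отражении. Дифференцирование даёт $\tilde{y}'(x)=-y'(1-x)$, $\tilde{y}''(x)=y''(1-x)$, а также $(\tilde{p}\tilde{y}'')(x)=(py'')(1-x)$ и $(\tilde{p}\tilde{y}'')'(x)=-(py'')'(1-x)$. Положив $\tilde{a}\rightleftharpoons 1-a$ и воспользовавшись тем, что $a\in(0,1]$ влечёт $\tilde{a}\in[0,1)$, из условий $y(a)\geqslant 0$, $y'(a)\leqslant 0$, $y''(a)\geqslant 0$, $(py'')'(a)\leqslant 0$ я выведу соответствующие неотрицательности $\tilde{y}(\tilde{a})\geqslant 0$, $\tilde{y}'(\tilde{a})\geqslant 0$, $\tilde{y}''(\tilde{a})\geqslant 0$, $(\tilde{p}\tilde{y}'')'(\tilde{a})\geqslant 0$.

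Третий шаг "--- прямое применение утверждения~\ref{lem:2:1} к функции $\tilde{y}$ с начальной точкой $\tilde{a}$, которое немедленно даёт строгие неравенства $\tilde{y}(1)>0$, $\tilde{y}'(1)>0$, $\tilde{y}''(1)>0$, $(\tilde{p}\tilde{y}'')'(1)>0$. Возвращение к исходным обозначениям в точке $x=0$ с учётом знаковых соотношений из второго шага приводит к требуемому заключению $y(0)>0$, $y'(0)<0$, $y''(0)>0$, $(py'')'(0)<0$.

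Основным (и по существу единственным) техническим моментом здесь является корректное отслеживание знаков производных при замене переменной; содержательных препятствий я в этом подходе не вижу. Без использования симметрии $x\mapsto 1-x$ пришлось бы заново проводить на отрезке $[0,a]$ в обратном направлении все построения, положенные в основу доказательства утверждения~\ref{lem:2:1}, что оказалось бы заметно менее экономным.
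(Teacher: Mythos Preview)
Your reduction is correct: the reflection \(x\mapsto 1-x\) carries the hypotheses of Proposition~\ref{lem:2:2} at the point \(a\) exactly into those of Proposition~\ref{lem:2:1} at \(\tilde a=1-a\in[0,1)\), and the sign bookkeeping you outline for \(\tilde y'\) and \((\tilde p\tilde y'')'\) is accurate. The paper itself gives no proof of either proposition; both are simply quoted from~\cite{LN} as Lemmas~2.1 and~2.2 of that work, so there is nothing in the text to compare your argument against. Deriving the second statement from the first via the obvious symmetry is the natural and economical route, and your plan carries through without difficulty.
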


\subsection{}
Перед тем, как приступить непосредственно к исследованию свойств пучка \(T\),
рассмотрим вспомогательный линейный пучок \(S:\mathbb C\to\mathcal B(\mathfrak H,
\mathfrak H^*)\), отвечающий дифференциальному уравнению
\begin{equation}\label{eq:2:1a}
	(py'')''-\lambda ry=0
\end{equation}
и какому-либо из наборов граничных условий \eqref{eq:1:2} или \eqref{eq:1:3}.
Как и в случае с пучком \(T\), значения пучка \(S\) предполагаются заданными
посредством правила
\begin{equation}\label{eq:qfS}
	\langle S(\lambda)y,z\rangle\rightleftharpoons
		\int\limits_0^1\bigl[p\,y''\overline{z''}-\lambda ry\overline{z}
		\bigr]\,dx-\lambda\alpha\,y(1)\overline{z(1)}.
\end{equation}
Коэффициенты \(p,r\in C[0,1]\) по-прежнему считаются равномерно положительными.
Аналогично тому, как было сделано при доказательстве утверждения~\ref{prop:pochast},
может быть показано, что ядро оператора \(S(\lambda)\) совпадает с пространством
классических решений исходной граничной задачи для уравнения \eqref{eq:2:1a}.

\begin{prop}\label{prop:1:1}
Все положительные собственные значения пучка \(S\) и все расположенные
на интервале \((0,1)\) нули его собственных функций, отвечающих положительным
собственным значениям, являются простыми.
\end{prop}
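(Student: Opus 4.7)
Общая идея плана --- свести оба утверждения к применению утверждений~\ref{lem:2:1} и~\ref{lem:2:2}. Основанием для их применимости служит то, что при $\lambda>0$ уравнение $(py'')''-\lambda ry=0$ принимает вид $(py'')''-\tilde ry=0$ с равномерно положительным коэффициентом $\tilde r\rightleftharpoons\lambda r$, что обеспечивает выполнение предположений указанных утверждений для всякой отвечающей положительному собственному значению собственной функции $y$.

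Простоту нулей собственных функций предлагаем доказывать от противного. Пусть $y$ --- такая функция, а $x_0\in(0,1)$ --- её нуль кратности не ниже двух, то есть $y(x_0)=y'(x_0)=0$. Заменяя при необходимости $y$ на $-y$, добьёмся выполнения неравенства $y''(x_0)\geqslant 0$. Тогда в зависимости от знака $(py'')'(x_0)$ применимо одно из двух утверждений: при $(py'')'(x_0)\geqslant 0$ --- утверждение~\ref{lem:2:1} при $a=x_0$, дающее $y(1)>0$ и $y'(1)>0$ (что противоречит обоим наборам граничных условий \eqref{eq:1:2} и \eqref{eq:1:3}); а при $(py'')'(x_0)<0$ --- утверждение~\ref{lem:2:2} при $a=x_0$, дающее $y(0)>0$ (что противоречит условию $y(0)=0$). В обоих случаях использование нетривиальности $y$ встроено в формулировки утверждений~\ref{lem:2:1}, \ref{lem:2:2} и не требует отдельного разбора.

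Простоту положительного собственного значения $\lambda$ предлагаем доказывать по аналогичной схеме. Если бы собственное подпространство было двумерным, то оно совпадало бы со всем двумерным пространством решений уравнения $(py'')''=\lambda ry$ с начальными условиями $y(0)=y'(0)=0$ и, следовательно, содержало бы нетривиальную функцию $y$ с дополнительным условием $y''(0)=0$. Её нетривиальность гарантирует тогда, что $(py'')'(0)\neq 0$, и подбором знака $y$ это значение будем считать положительным. Применение утверждения~\ref{lem:2:1} при $a=0$ приводит после этого к противоречию с условием $y(1)=0$ (в случае \eqref{eq:1:2}) или с условием $y'(1)=0$ (в случае \eqref{eq:1:3}).

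Основную техническую тонкость, как мы ожидаем, составит аккуратный выбор знака собственной функции и подходящего из утверждений~\ref{lem:2:1}, \ref{lem:2:2} в каждом из возникающих случаев; само же извлечение противоречий из нарушений граничных условий сведётся далее к рутинной проверке.
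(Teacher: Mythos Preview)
Предложенный план верен и по существу совпадает с доказательством в статье: и там, и у вас кратность нулей опровергается выбором знака $y$ так, чтобы $y''(x_0)\geqslant 0$, после чего разбор случаев по знаку $(py'')'(x_0)$ приводит к применению утверждений~\ref{lem:2:1} или~\ref{lem:2:2}; кратность же собственного значения опровергается выбором в двумерном собственном подпространстве функции с $y''(0)=0$ и последующим применением утверждения~\ref{lem:2:1}. Единственное, что стоит добавить для полноты, --- замечание о полупростоте спектра пучка $S$ (отсутствие присоединённых функций), вытекающее из самосопряжённой структуры аналогично доказательству утверждения~\ref{prop:0}; без него из геометрической простоты формально не следует алгебраическая.
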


\begin{proof}
Аналогично тому, как было сделано при доказательстве утверждения \ref{prop:0},
может быть установлен факт вещественности, дискретности и полупростоты спектра
пучка \(S\). Пусть теперь \(\lambda>0\) "--- кратное собственное значение
указанного пучка. В силу вышесказанного, пространство классических решений
соответствующей граничной задачи для уравнения \eqref{eq:2:1a} оказывается тогда
не менее, чем двумерным. Соответственно, внутри него должна найтись нетривиальная
функция, удовлетворяющая условиям
\[
	y(0)=y'(0)=y''(0)=y'(1)=0,
\]
что противоречит утверждению \ref{lem:2:1}.

Наконец, пусть \(a\in (0,1)\) "--- кратный нуль собственной функции пучка \(S\),
отвечающей собственному значению \(\lambda>0\). Без ограничения общности при этом
можно считать выполненным неравенство \(y''(a)\geqslant 0\). Но тогда оказывается
существующим нетривиальное решение уравнения \eqref{eq:2:1a}, удовлетворяющее
какому-либо из наборов условий
\begin{align*}
	&y(a)=y'(a)=y'(1)=0,& y''(a)&\geqslant 0,&(py'')'(a)&\geqslant 0,\\
	&y(a)=y'(a)=y'(0)=0,& y''(a)&\geqslant 0,&(py'')'(a)&\leqslant 0,
\end{align*}
что также противоречит утверждениям \ref{lem:2:1} и \ref{lem:2:2}.
\end{proof}

\begin{prop}\label{prop:1:2}
Пусть \(\alpha\geqslant 0\). Тогда оператор \(-[S(0)]^{-1}(dS/d\lambda):
\mathfrak H\to\mathfrak H\) не увеличивает числа перемен знака никакой
вещественнозначной функции \(f\in\mathfrak H\).
\end{prop}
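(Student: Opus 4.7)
The plan is to realise $A:=-[S(0)]^{-1}(dS/d\lambda)$ as an integral operator whose kernel is the Green's function of a disconjugate fourth-order boundary-value problem, and then to invoke the variation-diminishing property of sign-regular (oscillation) kernels. First, unwinding the definition via~\eqref{eq:qfS}: for real $f\in\mathfrak H$, the image $u:=Af\in\mathfrak H$ is the unique element satisfying
\[
   \int_0^1 p\,u''\,z''\,dx
   = \int_0^1 r\,f\,z\,dx + \alpha\,f(1)\,z(1)
   \qquad\text{for all }z\in\mathfrak H.
\]
Integration by parts, exactly as in the proof of Proposition~\ref{prop:pochast}, shows that $u$ is a classical solution of $(pu'')''=rf$ on $(0,1)$ satisfying the boundary conditions built into $\mathfrak H$, together with $(pu'')'(1)=-\alpha f(1)$ in the case~\eqref{eq:h2}. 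The form $\int_0^1 p u'' z''\,dx$ is coercive on $\mathfrak H$, so the problem is uniquely solvable and admits a Green's function $G(x,t)$, giving the representation
\[
   u(x)=\int_0^1 G(x,t)\,f(t)\,d\nu(t),
   \qquad d\nu(t):=r(t)\,dt+\alpha\,\delta_1(t),
\]
where $d\nu$ is a nonnegative finite measure on $[0,1]$ thanks to the hypothesis $\alpha\geqslant 0$.

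Second, I would show that $G$ is an oscillation kernel in the sense of Krein--Gantmakher--Kellogg: for every $n\geqslant 1$ and every $0<x_1<\dots<x_n<1$, $0<t_1<\dots<t_n<1$,
\[
   \det\bigl(G(x_i,t_j)\bigr)_{i,j=1}^n>0.
\]
This is the quantitative form of disconjugacy of the operator $y\mapsto(py'')''$ with respect to our boundary conditions, and Propositions~\ref{lem:2:1},~\ref{lem:2:2} supply precisely this disconjugacy. Concretely, a nontrivial linear combination $\phi(x):=\sum_{j=1}^n c_j\,G(x,t_j)$ solves the homogeneous equation $(p\phi'')''=0$ on each subinterval $(t_{j-1},t_j)$, has prescribed jumps of $(p\phi'')'$ at the knots $t_j$, and respects the $\mathfrak H$-boundary conditions at $0$ and $1$. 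A successive application of Propositions~\ref{lem:2:1} and~\ref{lem:2:2} to the maximal subintervals on which $\phi$ satisfies the initial/terminal data bounds the number of sign changes of $\phi$ in $(0,1)$ by $n-1$, and by a standard Chebyshev-system/Binet--Cauchy argument this is equivalent to the strict determinantal positivity above.

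Third, once $G$ is known to be an oscillation kernel, the proposition is the classical variation-diminishing property of integral operators with sign-regular kernels against nonnegative measures (see \cite{GK:1950},~\cite{BP:1994},~\cite{Vl:2009}): the operator $f\mapsto\int_0^1 G(x,t)f(t)\,d\nu(t)$ does not increase the number of sign changes of any continuous $f$, and since $\mathfrak H\hookrightarrow C^1[0,1]$ this yields the desired conclusion. The main obstacle is the verification of sign-regularity of $G$ in the second step: while the idea of invoking the disconjugacy Propositions~\ref{lem:2:1},~\ref{lem:2:2} to control the zero structure of $\phi(x)=\sum_j c_j G(x,t_j)$ is natural, its execution requires careful bookkeeping of Rolle-type zero counting across the knots $t_j$ and across both endpoints, and it is precisely here that the strict inequalities at $0$ and $1$ provided by the two lemmas play the decisive role.
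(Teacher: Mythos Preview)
Your route via the Green's function and the Gantmakher--Krein variation-diminishing machinery is sound in outline, but it is a genuinely different and considerably heavier argument than the paper's. The paper, after identifying (as you do) that $y:=-[S(0)]^{-1}(dS/d\lambda)f$ is the classical solution of $(py'')''=rf$ with boundary conditions \eqref{eq:1:2} or $y(0)=y'(0)=y'(1)=(py'')'(1)+\alpha f(1)=0$, proceeds by a direct Rolle count \emph{from $y$ back to $f$}: assuming $y$ has at least $n$ sign changes on $(0,1)$, repeated application of the mean value theorem combined with the boundary data forces $y'$, then $py''$, then $(py'')'$ to have at least a controlled number of sign changes, and finally $f=(py'')''/r$ to have at least $n$ (the last step in the second case uses the inhomogeneous boundary relation at $x=1$ together with $\alpha\geqslant 0$). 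No Green's function, no determinantal inequalities, no oscillation-kernel theory --- just four lines of Rolle bookkeeping.

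Two comments on your sketch itself. First, Propositions~\ref{lem:2:1} and~\ref{lem:2:2} are formulated for the equation $(py'')''-ry=0$ with $r$ \emph{uniformly positive}; your spline $\phi(x)=\sum_j c_jG(x,t_j)$ satisfies $(p\phi'')''=0$ on each subinterval, so those propositions do not apply as stated --- you would need the (easier, but different) disconjugacy statements for $y\mapsto(py'')''$. Second, you yourself flag the determinantal positivity of $G$ as ``the main obstacle'' and leave it as a plan; together with the need to handle the point mass $\alpha\,\delta_1$ at the endpoint in the second case, this means your argument remains an outline, whereas the paper's direct count is complete and self-contained.
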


\begin{proof}
Путём рассуждения, аналогичного проведённому при доказательстве утверждения
\ref{prop:pochast}, легко устанавливается, что решение уравнения \(S(0)y=
-(dS/d\lambda)f\) представляет собой классическое решение граничной задачи,
отвечающей уравнению
\[
	(py'')''=rf,
\]
а также граничным условиям \eqref{eq:1:2}, если исходный пучок \(S\) отвечает
тем же граничным условиям, и граничным условиям
\begin{equation}\label{eq:1:3x}
	y(0)=y'(0)=y'(1)=(py'')'(1)+\alpha f(1)=0,
\end{equation}
если исходный пучок \(S\) отвечает граничным условиям \eqref{eq:1:3}. Предположим,
что функция \(y\) имеет на интервале \((0,1)\) не менее \(n\) перемен знака,
то есть что найдутся \(n+1\)~точек
\[
	0<x_1<\ldots<x_{n+1}<1,
\]
удовлетворяющих при каждом \(k\in [1,n]\) неравенству \(y(x_k)y(x_{k+1})<0\).
Тогда будут справедливы следующие утверждения, последовательно устанавливаемые
на основе теоремы Лагранжа о среднем значении:
\begin{enumerate}
\item Функция \(y'\) имеет не менее \(n+1\) перемен знака в случае \eqref{eq:1:2},
и не менее \(n\) перемен знака в случае \eqref{eq:1:3x}.
\item Функции \(y''\) и \(py''\) имеют не менее \(n+2\) перемен знака в случае
\eqref{eq:1:2}, и не менее \(n+1\) перемен знака в случае \eqref{eq:1:3x}.
\item Функция \((py'')'\) имеет не менее \(n+1\) перемен знака в случае
\eqref{eq:1:2}, и не менее \(n\) перемен знака в случае \eqref{eq:1:3x}.
\end{enumerate}
В случае \eqref{eq:1:2} функция \(f=(py'')''/r\) потому заведомо имеет не менее
\(n\) перемен знака. В случае же \eqref{eq:1:3x} найдутся \(n+1\)~точек
\[
	0<\xi_1<\ldots<\xi_{n+1}<1,
\]
удовлетворяющих при каждом \(k\in [1,n]\) неравенству \((py'')'(\xi_k)\cdot
(py'')'(\xi_{k+1})<0\), а также \(n\)~точек \(\zeta_k\in (\xi_k,\xi_{k+1})\),
удовлетворяющих при каждом \(k\in [1,n]\) неравенству \((py'')'(\xi_k)\cdot
f(\zeta_k)<0\). При этом, согласно \eqref{eq:1:3x}, выполняется хотя бы одно
из неравенств \((py'')'(\xi_{n+1})\cdot (py'')'(1)\leqslant 0\) или
\((py'')'(\xi_{n+1})\cdot f(1)<0\), а потому найдётся точка \(\zeta_{n+1}\in
(\xi_{n+1},1)\) со свойством \((py'')'(\xi_{n+1})\cdot f(\zeta_{n+1})<0\).
Тем самым, в случае \eqref{eq:1:3x} функция \(f\) также имеет не менее \(n\)
перемен знака.
\end{proof}

Заметим, что в случае \(\alpha\geqslant 0\) при любом \(\lambda\leqslant 0\)
заданная определением \eqref{eq:qfS} квадратичная форма оператора \(S(\lambda)\)
является равномерно положительной на пространстве \(\mathfrak H\). Соответственно,
все собственные значения пучка \(S\) являются тогда положительными и подпадают
под действие утверждения \ref{prop:1:1}. Расположим спектр пучка \(S\)
в возрастающую последовательность
\[
	0<\lambda_1<\ldots<\lambda_n<\ldots
\]
простых положительных собственных значений. Зафиксируем также некоторую
последовательность \(\{y_n\}_{n=1}^{\infty}\) отвечающих собственным значениям
\(\lambda_n\) нормированных (в пространстве \(\mathfrak H\)) вещественнозначных
собственных функций. Поскольку оператор \([S(0)]^{-1}(dS/d\lambda)\) подобен
самосопряжённому оператору \([JS(0)]^{-1/2}J(dS/d\lambda)[JS(0)]^{-1/2}\),
функциональная последовательность \(\{y_n\}_{n=1}^{\infty}\) образует базис Рисса
в замыкании области значений оператора \([S(0)]^{-1}(dS/d\lambda)\).

\begin{prop}
Пусть \(\alpha\geqslant 0\). Тогда каждая собственная функция \(y_n\) имеет
в точности \(n-1\)~нулей на интервале \((0,1)\).
\end{prop}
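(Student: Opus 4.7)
The plan is a Kellogg--Krein style oscillation argument built on the variation-diminishing property of Proposition~\ref{prop:1:2}, the simplicity of interior zeros from Proposition~\ref{prop:1:1}, and the Riesz basis property recorded in the paragraph preceding the statement. Set $K:=-[S(0)]^{-1}(dS/d\lambda)$, write $S^-(f)$ for the number of sign changes of a real-valued function on $(0,1)$, and put $m_n:=S^-(y_n)$.

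\emph{Setup.} One has $Ky_n=\mu_n y_n$ with $\mu_n:=\lambda_n^{-1}$ a strictly decreasing positive null sequence. Since $y_n$ solves $(py_n'')''=\lambda_n r y_n$, Proposition~\ref{prop:1:1} (applied with the positive coefficient $\lambda_n r$ in the role of $r$) guarantees that every zero of $y_n$ in $(0,1)$ is simple; hence $m_n$ equals the actual number of such zeros. The Sobolev embedding $\mathfrak H\hookrightarrow C^1[0,1]$ makes simple sign crossings stable under $\mathfrak H$-convergence, and combining this with Riesz convergence yields the central iteration principle: for any $f=\sum_{k\geqslant N}c_ky_k$ in the closure of $\mathrm{range}\,K$ with $c_N\neq 0$, one has $\mu_N^{-j}K^jf\to c_Ny_N$ in $\mathfrak H$, so for all sufficiently large $j$ the iterated variation-diminishing bound (Proposition~\ref{prop:1:2} applied $j$ times) gives $m_N\leqslant S^-(K^jf)\leqslant S^-(f)$.

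\emph{Two halves.} I would then derive $m_n=n-1$ through one underlying Chebyshev statement: the subspace $V_n:=\mathrm{span}(y_1,\ldots,y_n)$ admits no nonzero element with more than $n-1$ sign changes on $(0,1)$. Taking $f:=y_n\in V_n$ in this statement yields $m_n\leqslant n-1$ at once. For the reverse inequality, a dimension count inside $V_n$ permits interpolation: for any prescribed $n-1$ points of $(0,1)$ there is a nonzero $f\in V_n$ vanishing at all of them, and the Chebyshev property forces those zeros to be genuine sign changes. Choosing the interpolation nodes so as to separate the sign intervals of a hypothetical $y_n$ with $m_n<n-1$, and then coupling with the iteration principle applied at index $N=n$, produces the required contradiction.

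\emph{Main obstacle.} The crux is the Chebyshev statement, which I would prove by induction on $n$. The base case $m_1=0$ follows immediately from the iteration principle applied with any nonnegative $f\in\mathfrak H$ of nonzero $y_1$-component. The inductive step combines the variation-diminishing bound with the Green-function-level total positivity encoded in Propositions~\ref{lem:2:1}--\ref{lem:2:2}: those structural facts about fourth-order solutions are exactly what converts the abstract iteration inequality into the dimension-nodal count needed for the Chebyshev property to pass from level $n-1$ to level $n$.
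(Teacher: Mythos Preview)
Your proposal has a genuine gap. The ``iteration principle'' you state is correct but one-sided: it controls only tail expansions \(f=\sum_{k\geqslant N}c_ky_k\) and yields \(m_N\leqslant S^-(f)\). It says nothing about finite sums \(f\in V_n\), so invoking it ``at index \(N=n\)'' for an interpolating element of \(V_n\) is illegitimate. Separately, your claim that the Chebyshev bound on \emph{sign changes} forces prescribed zeros to be sign changes is unjustified (a double zero at a node is not excluded). Most seriously, the inductive step from \(V_{n-1}\) to \(V_n\) is pure hand-waving: Propositions~\ref{lem:2:1}--\ref{lem:2:2} are endpoint statements about single ODE solutions, not a total-positivity property of the Green kernel, and they do not by themselves yield a Haar--Chebyshev property of the eigenspaces.

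The paper's argument is more direct and bypasses the Chebyshev property entirely by using a second, \emph{backward} iteration that you are missing. For \(f=\sum_{k=1}^{n}c_ky_k\) with \(c_n\neq 0\) one sets \(f_m:=\sum_{k=1}^{n}(\lambda_k/\lambda_n)^m c_ky_k\); then \(f=\lambda_n^m K^m f_m\) and \(f_m\to c_ny_n\) in \(C^2[0,1]\), so the variation-diminishing bound together with the endpoint non-degeneracies \(y_n''(0)\neq 0\) and \(y_n''(1)\neq 0\) (resp.\ \(y_n(1)\neq 0\)) drawn from Propositions~\ref{lem:2:1}--\ref{lem:2:2} give \(S^-(f)\leqslant m_n\). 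Since linear independence (plus a small perturbation to secure \(c_n\neq 0\)) furnishes some \(f\in V_n\) with at least \(n-1\) sign changes, this yields \(m_n\geqslant n-1\). For the reverse inequality the paper uses your forward iteration together with the \(n\)-dimensional span of the polynomials \(x^{k+2}(1-x)^2\), \(k=0,\ldots,n-1\), which lies in \(\mathfrak H\) and contains no function with more than \(n-1\) sign changes; a codimension count inside its \(K\)-image produces a nontrivial tail function \(\sum_{k\geqslant N}c_ky_k\) (some \(N\geqslant n\)) with at most \(n-1\) sign changes, whence \(m_N\leqslant n-1\). Combined with \(m_N\geqslant N-1\) this forces \(N=n\) and \(m_n=n-1\). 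In short, the Chebyshev property you aim for is a \emph{corollary} of the result via the backward iteration, not a lemma towards it.
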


\begin{proof}
Рассмотрим произвольную вещественнозначную функцию вида
\begin{equation}\label{eq:funk2}
	f=\sum\limits_{k=1}^{n}c_ky_k.
\end{equation}
Функциональная последовательность \(\{f_m\}_{m=1}^{\infty}\) вида
\[
	f_m\rightleftharpoons\sum\limits_{k=1}^n\left(\dfrac{\lambda_k}{%
	\lambda_n}\right)^mc_ky_k
\]
сходится в пространстве \(C^2[0,1]\) к функции \(c_ny_n\). При этом из утверждения
\ref{lem:2:1} вытекает справедливость неравенства \(y''_n(0)\neq 0\),
а из утверждения \ref{lem:2:2} вытекает справедливость неравенства \(y''_n(1)\neq
0\) в случае граничных условий \eqref{eq:1:2}, и неравенства \(y_n(1)\neq 0\)
в случае граничных условий \eqref{eq:1:3}. Из этих фактов и утверждения
\ref{prop:1:1} следует, что при \(m\gg 1\) функции \(f_m\) имеют в случае
\(c_n\neq 0\) в точности столько же перемен знака, сколько у собственной функции
\(y_n\). С учётом тождества
\[
	f=\lambda_n^m\bigl\{-[S(0)]^{-1}(dS/d\lambda)\bigr\}^m f_m
\]
и утверждения~\ref{prop:1:2} сказанное означает, что функция \(f\) вида
\eqref{eq:funk2} также имеет в указанном случае не большее число перемен знака.

Ввиду линейной независимости системы собственных функций пучка \(S\), при любом
\(n\geqslant 1\) найдётся набор коэффициентов \(\{c_k\}_{k=1}^n\), для которого
соответствующая функция \eqref{eq:funk2} будет иметь не менее \(n-1\) перемен
знака. При этом всегда можно добиться выполнения неравенства \(c_n\neq 0\).
Тогда, с учётом вышесказанного, функция \(y_n\) не может иметь менее \(n-1\)
перемен знака.

Рассмотрим теперь произвольную вещественнозначную функцию вида
\begin{equation}\label{eq:funk1}
	f=\sum\limits_{k=n}^{\infty}c_ky_k.
\end{equation}
Функциональная последовательность \(\{f_m\}_{m=1}^{\infty}\) вида
\[
	f_m\rightleftharpoons\lambda_n^m\bigl\{-[S(0)]^{-1}(dS/d\lambda)\bigr\}^mf
		=\sum\limits_{k=n}^{\infty}\left(\dfrac{\lambda_n}{\lambda_k}
		\right)^mc_ky_k
\]
сходится в пространстве \(\mathfrak H\) к функции \(c_ny_n\). Соответственно,
в случае \(c_n\neq 0\) при \(m\gg 1\) функции \(f_m\) не могут иметь меньшее число
перемен знака, чем у собственной функции \(y_n\). Согласно утверждению
\ref{prop:1:2} это означает, что функция \(f\) вида \eqref{eq:funk1} также имеет
в указанном случае не меньшее число перемен знака.

Заметим теперь, что вложенная в пространство \(\mathfrak H\) линейная оболочка
набора многочленов \(\{x^{k+2}\cdot (1-x)^2\}_{k=0}^{n-1}\) имеет размерность \(n\)
и не содержит функций с более чем \(n-1\) знакопеременами. Соответственно, среди
всевозможных результатов действия оператора \([S(0)]^{-1}(dS/d\lambda)\)
на указанные многочлены найдётся нетривиальная функция вида \eqref{eq:funk1}
с не превосходящим \(n-1\) числом перемен знака. С учётом вышесказанного
это означает существование номера \(N\geqslant n\), отвечающая которому собственная
функция \(y_N\) имеет не более \(n-1\) перемен знака.

Объединяя полученные результаты, устанавливаем, что при любом \(n\geqslant 1\)
собственная функция \(y_n\) имеет в точности \(n-1\) перемен знака на интервале
\((0,1)\). Ввиду гарантированной утверждением~\ref{prop:1:1} простоты нулей
этой функции, последнее равносильно доказываемому утверждению.
\end{proof}

\begin{prop}\label{prop:2:4}
Пусть \(\alpha\geqslant 0\). Тогда для любой собственной пары
\(\{\lambda,y\}\) пучка \(S\) функция \(y\) имеет в точности \(\ind S(\lambda)\)
нулей на интервале \((0,1)\).
\end{prop}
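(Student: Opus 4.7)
The plan is to reduce the statement to the purely spectral identity $\ind S(\lambda_n)=n-1$; combined with the preceding proposition (which already gives that $y_n$ has exactly $n-1$ zeros on $(0,1)$) this yields the claim. Thus the only remaining task is a negative-inertia computation for the quadratic form $\langle S(\lambda_n)y,y\rangle$ on $\mathfrak H$.

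To carry this out I would return to the similarity device employed in the proof of Proposition~\ref{prop:0}. Under the assumption $\alpha\geqslant 0$ the operator $JS(0)$ is uniformly positive on $\mathfrak H$, while the quadratic form of $-J(dS/d\lambda)$ equals $\int_0^1 r|y|^2\,dx+\alpha|y(1)|^2$ and is therefore strictly positive. Hence
\[
    R\rightleftharpoons -[JS(0)]^{-1/2}\,J(dS/d\lambda)\,[JS(0)]^{-1/2}
\]
is a compact selfadjoint strictly positive operator on $\mathfrak H$; its orthonormal eigenbasis $\{u_k\}_{k=1}^{\infty}$ corresponds to the decreasing positive eigenvalues $\mu_k=\lambda_k^{-1}$. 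The substitution $u\rightleftharpoons[JS(0)]^{1/2}y$ is a Hilbert-space isomorphism sending $\langle S(\lambda)y,y\rangle$ to $\langle(I-\lambda R)u,u\rangle$, so in particular it preserves negative-inertia indices.

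It remains to count the negative inertia of $I-\lambda_n R$. Diagonalising in the basis $\{u_k\}$, the form has eigenvalues $1-\lambda_n/\lambda_k$, which are strictly negative precisely for $k=1,\dots,n-1$, equal zero for $k=n$ and strictly positive for $k>n$; since only finitely many of these values are negative and they are separated from zero, the negative-inertia index is exactly $n-1$. Consequently $\ind S(\lambda_n)=n-1$, and combined with the previous proposition this yields the equality claimed. I do not anticipate any essential obstacle: the delicate oscillation analysis has already been carried out earlier, and what remains is only standard min-max bookkeeping for the compact selfadjoint operator $R$.
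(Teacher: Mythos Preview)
Your argument is correct and follows the same route as the paper: both combine the preceding proposition (that $y_n$ has exactly $n-1$ zeros) with the identity $\ind S(\lambda_n)=n-1$. The paper simply cites this identity as an instance of the variational principle \cite[Proposition~6]{LSY}, whereas you rederive it explicitly via the diagonalisation of $I-\lambda_n R$; the content is the same.
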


Данное утверждение представляет собой тривиальное следствие предыдущего
и известных вариационных принципов (см., например, \cite[Proposition~6]{LSY}).

\subsection{}\label{pt:2.3}
Вернёмся теперь к рассмотрению исходного операторного пучка \(T\). Назовём его
\emph{допустимым множеством} \(\Let(p)\) множество таких значений параметра
\(\lambda\in\mathbb R\), для которых квадратичная форма
\begin{equation}\label{eq:6}
	\int\limits_0^1 [p\,|y'|^2-\lambda |y|^2]\,dx
\end{equation}
равномерно положительна на \(\Wo_2^1[0,1]\). Это множество зависит только
от выбора коэффициента \(p\) и представляет собой бесконечную влево открытую
полупрямую, имеющую в качестве своей правой границы наименьшее собственное
значение задачи
\begin{gather}\label{eq:7}
	-(py')'-\lambda y=0,\\ \label{eq:8}
	y(0)=y(1)=0.
\end{gather}
Нашей ближайшей целью является установление того факта, что при всяком
\(\lambda\in\Let(p)\) квадратичная форма оператора \(T(\lambda)\) может быть
посредством замены переменной превращена в квадратичную форму некоторого
оператора рассмотренного выше модельного типа.

Как хорошо известно \cite[Теорема~1.6.2]{Po2009}, для каждого \(\lambda\in\Let(p)\)
можно зафиксировать равномерно положительное решение \(\sigma\in C^1[0,1]\)
дифференциального уравнения
\begin{equation}\label{eq:sturm}
	-(p\sigma')'-\lambda\sigma=0.
\end{equation}
Замена параметра
\[
	t(x)\rightleftharpoons\dfrac{1}{\omega}\int\limits_0^x\sigma\,d\xi,\qquad
	\omega\rightleftharpoons\int\limits_0^1\sigma\,d\xi
\]
определяет при этом непрерывную биекцию \(V:\mathfrak H\to\mathfrak H\),
сопоставляющую каждой функции \(y\in\mathfrak H\) функцию \(z\in\mathfrak H\) вида
\begin{align*}
	z(x)&\equiv y(t(x)),\\ z'(x)&\equiv y'(t(x))\dfrac{\sigma(x)}{\omega},\\
	z''(x)&\equiv y''(t(x))\dfrac{\sigma^2(x)}{\omega^2}+
	y'(t(x))\dfrac{\sigma'(x)}{\omega}.
\end{align*}

\begin{prop}\label{prop:2:1}
Оператор \(\hat T\rightleftharpoons V^*T(\lambda)V\) удовлетворяет тождеству
\[
	\langle\hat Ty,z\rangle\equiv\int\limits_0^1\left[\hat py''\overline{z''}
		-\hat ry\overline{z}\right]\,dx-\lambda\alpha\,y(1)\overline{z(1)},
\]
где функции \(\hat p\) и \(\hat r\) имеют вид
\begin{align}\label{eq:hatp}
	\hat p(t(x))&\equiv p(x)\dfrac{\sigma^3(x)}{\omega^3},\\
	\label{eq:hatr}
	\hat r(t(x))&\equiv\lambda cr(x)\dfrac{\omega}{\sigma(x)}.
\end{align}
\end{prop}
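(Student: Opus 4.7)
The plan is to expand $\langle V^*T(\lambda)Vy,z\rangle=\langle T(\lambda)(Vy),Vz\rangle$ directly from definition \eqref{eq:qf} and to compare the result with the right-hand side of the claimed identity after the substitution $\tau=t(x)$.

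Setting $Y\rightleftharpoons Vy$ and $Z\rightleftharpoons Vz$, I would use the formulas for $Y',Y''$ (and their analogues for $Z$) displayed just above the proposition to rewrite the integrand of $\langle T(\lambda)Y,Z\rangle$ as a sum of five pieces,
\[
p\frac{\sigma^4}{\omega^4}\,y''\overline{z''}+p\frac{\sigma^2\sigma'}{\omega^3}\bigl[y''\overline{z'}+y'\overline{z''}\bigr]+p\frac{(\sigma')^2}{\omega^2}\,y'\overline{z'}-\lambda\frac{\sigma^2}{\omega^2}\,y'\overline{z'}-\lambda cr\,y\overline{z},
\]
in which all derivatives of $y$ and $z$ are evaluated at $\tau=t(x)$. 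The boundary term $-\lambda\alpha y(1)\overline{z(1)}$ survives unchanged because $t(1)=1$.

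Next I would change variable $\tau=t(x)$ on the right-hand side of the claimed identity, using the Jacobian $d\tau=(\sigma/\omega)\,dx$. A direct check shows that the prescriptions \eqref{eq:hatp} and \eqref{eq:hatr} are exactly what is needed to reproduce, respectively, the first piece and the last piece of the decomposition above; the boundary contributions match trivially.

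What remains is to show that the three middle pieces integrate to zero over $[0,1]$. The crucial identity is
\[
\frac{d}{dx}\bigl[y'(t(x))\overline{z'(t(x))}\bigr]=\frac{\sigma(x)}{\omega}\bigl[y''(t(x))\overline{z'(t(x))}+y'(t(x))\overline{z''(t(x))}\bigr],
\]
which converts the cross-term contribution to $\int_0^1(p\sigma\sigma'/\omega^2)\,\frac{d}{dx}[y'(t)\overline{z'(t)}]\,dx$. An integration by parts is then performed; its boundary contributions vanish because every $y\in\mathfrak H$ obeys $y'(0)=y'(1)=0$ under each of the two admissible sets of boundary conditions. Using $(p\sigma\sigma')'=p(\sigma')^2+\sigma(p\sigma')'$ and substituting $(p\sigma')'=-\lambda\sigma$ from \eqref{eq:sturm}, the resulting integrand is precisely the negative of the sum of the two $y'\overline{z'}$ contributions in the decomposition above, and the desired cancellation follows.

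The bookkeeping is somewhat tedious but entirely routine; the only substantive step is the invocation of \eqref{eq:sturm}, which is what makes the Liouville-type substitution $t(x)$ absorb the first-derivative term $-\lambda Y'\overline{Z'}$ of the original quadratic form into the pure $\int\hat p\,y''\overline{z''}\,d\tau$ expression of the model type.
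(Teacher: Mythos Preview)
Your argument is correct and rests on the same key step as the paper's proof: the Sturm equation \eqref{eq:sturm} is what kills the first-derivative contributions. The organization differs slightly. The paper works with the \emph{quadratic} form $\langle T(\lambda)Vy,Vy\rangle$, observes that $w\rightleftharpoons|z'|^2/\sigma$ (with $z=Vy$) belongs to $\Wo_2^1[0,1]$, and plugs this test function into the weak form $\int_0^1[p\sigma'w'-\lambda\sigma w]\,dx=0$ of \eqref{eq:sturm}; combined with the algebraic identity $|z''|^2-|y''(t)|^2\sigma^4/\omega^4=\sigma'(|z'|^2/\sigma)'$, this disposes of the middle terms in one stroke, and the bilinear statement follows by polarization. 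You instead expand the bilinear form directly, integrate the cross-term by parts, and invoke the strong form $(p\sigma')'=-\lambda\sigma$. The two computations are essentially dual to each other; yours is a bit more pedestrian but avoids the polarization step and the auxiliary identity.
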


\begin{proof}
Повторим почти дословно рассуждения из доказательства теоремы~2.1 работы
\cite{BV:2006}. А именно, заметим, что для любой функции \(w\in\Wo_2^1[0,1]\)
выполняется равенство
\[
	\int\limits_0^1[p\sigma'\overline{w'}-\lambda\sigma w]\,dx=0.
\]
Полагая \(w\rightleftharpoons |z'|^2/\sigma\), \(z\rightleftharpoons Vy\),
получаем отсюда
\[
	\int\limits_0^1\left[p\sigma'\left(\dfrac{|z'|^2}{\sigma}\right)'-
	\lambda\,|z'|^2\right]\,dx=0.
\]
С учётом непосредственно проверяемого тождества
\[
	|z''(x)|^2-|y''(t(x))|^2\,\dfrac{\sigma^4(x)}{\omega^4}\equiv
	\sigma'(x)\,\left(\dfrac{|z'|^2}{\sigma}\right)'(x)
\]
устанавливаем теперь справедливость не зависящих от выбора функции \(y\in
\mathfrak H\) равенств
\begin{flalign*}
	&& \langle\hat Ty,y\rangle&=\langle T(\lambda)z,z\rangle\\
	&& &=\int\limits_0^1[p\,|z''|^2-\lambda(|z'|^2+cr\,|z|^2)]\,dx-
		\lambda\alpha\,|z(1)|^2\\
	&& &=\int\limits_0^1\hat p\,|y''|^2\,dt+\int\limits_0^1\left[p\sigma'
	\left(\dfrac{|z'|^2}{\sigma}\right)'-\lambda\,|z'|^2\right]\,dx-
	\int\limits_0^1\hat r\,|y|^2\,dt-\lambda\alpha\,|y(1)|^2\\
	&& &=\int\limits_0^1[\hat p\,|y''|^2-\hat r\,|y|^2]\,dt-
		\lambda\alpha\,|y(1)|^2.&&
\end{flalign*}
Ввиду принципа поляризации \cite[Гл.~I, \mbox{(6.11)}]{Kato}, это означает
справедливость доказываемого утверждения.
\end{proof}

\begin{prop}\label{prop:2:3}
Никакое число \(\lambda\in\Let(p)\) не может одновременно являться собственным
значением как пучка, отвечающего граничным условиям \eqref{eq:1:2}, так и пучка,
отвечающего граничным условиям \eqref{eq:1:3}.
\end{prop}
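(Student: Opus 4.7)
Идея "--- воспользоваться заменой переменной $V$ из утверждения~\ref{prop:2:1}, чтобы свести задачу к паре классических решений одного и того же уравнения $(\hat p u'')''=\hat r u$ с двумя различными наборами граничных условий; затем тождество Лагранжа позволит выделить одно решение, удовлетворяющее избыточному набору из пяти условий, который будет приведён к противоречию. Предположу противное: пусть $\lambda\in\Let(p)$ является собственным значением как пучка с условиями \eqref{eq:1:2}, так и пучка с условиями \eqref{eq:1:3}, и пусть $y_1,y_2$ "--- отвечающие им собственные функции. Положу $u_j\rightleftharpoons V^{-1}y_j$. Рассуждение, аналогичное доказательству утверждения~\ref{prop:pochast}, показывает, что $u_1,u_2$ суть классические решения уравнения $(\hat p u'')''=\hat r u$ с теми же граничными условиями (в которых $p$ формально заменено на $\hat p$, а значение $\lambda\alpha$ сохранено). Коэффициент $\hat p$ при этом равномерно положителен, а знак функции $\hat r=\lambda cr\omega/\sigma$ совпадает со знаком произведения $\lambda c$.

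\smallskip
Следующим шагом применю к оператору $(\hat p\,\cdot'')''$ тождество Лагранжа на паре $(u_1,u_2)$: поскольку эти функции отвечают одному и тому же значению $\lambda$, соответствующая билинейная форма $W(u_1,u_2)$ постоянна на отрезке $[0,1]$. При $x=0$ все четыре её слагаемых обращаются в нуль за счёт общих условий Дирихле, а при $x=1$ условия $u_1(1)=u_1'(1)=u_2'(1)=0$ оставляют лишь одно слагаемое, так что $(\hat p u_1'')'(1)\cdot u_2(1)=0$. Если $u_2(1)=0$, то из смешанного условия в точке $x=1$ вытекает также $(\hat p u_2'')'(1)=0$, и функция $u\rightleftharpoons u_2$ удовлетворяет пяти условиям
\[
    u(0)=u'(0)=u(1)=u'(1)=(\hat p u'')'(1)=0.
\]
В противоположном случае тем же пяти условиям удовлетворяет $u\rightleftharpoons u_1$.

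\smallskip
Остаётся установить, что нетривиальное решение уравнения $(\hat p u'')''=\hat r u$ с такими пятью граничными условиями существовать не может; именно здесь я ожидаю основную техническую трудность, осложнённую произвольностью знака $\hat r$. В случае $\lambda c\leqslant 0$ (следовательно, $\hat r\leqslant 0$) двукратное интегрирование по частям с использованием четырёх условий Дирихле приводит к энергетическому тождеству $\int_0^1\hat p(u'')^2\,dt=\int_0^1\hat r u^2\,dt\leqslant 0$, откуда $u''\equiv 0$ и, с учётом $u(0)=u(1)=0$, $u\equiv 0$. В случае же $\lambda c>0$ коэффициент $\hat r$ равномерно положителен, и я применю утверждение~\ref{lem:2:2} с $a=1$: условия $u(1)\geqslant 0$, $u'(1)\leqslant 0$ и $(\hat p u'')'(1)\leqslant 0$ выполнены автоматически (все три значения равны нулю), а знак $u''(1)$ можно считать неотрицательным, заменив при необходимости $u$ на $-u$. Равенство $u''(1)=0$ означает тогда одновременное обращение в нуль всех четырёх начальных данных при $x=1$ и, по единственности задачи Коши, $u\equiv 0$; в противоположном случае утверждение~\ref{lem:2:2} даёт строгое неравенство $u(0)>0$, противоречащее условию $u(0)=0$.
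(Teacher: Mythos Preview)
Your argument is correct. Both your proof and the paper's begin by applying the change of variable $V$ from утверждение~\ref{prop:2:1} and then derive a contradiction from the existence of a nontrivial solution of $(\hat p u'')''=\hat r u$ with an overdetermined set of boundary conditions, ultimately invoking утверждение~\ref{lem:2:2} in the case $\hat r>0$.

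The routes to that overdetermined solution are different, however. The paper first observes (via the same energy identity you use later) that nontrivial solvability of the Dirichlet problem forces $\hat r>0$, and then argues by dimension: both $u_1$ and $u_2$ satisfy the three conditions $u(0)=u'(0)=u'(1)=0$; if they are proportional, one function satisfies both sets of boundary conditions and утверждение~\ref{lem:2:2} yields the contradiction; if they are independent, then every solution with $u(0)=u'(0)=0$ also satisfies $u'(1)=0$, and утверждение~\ref{lem:2:1} is violated. Your proof replaces this dichotomy by the Lagrange identity, which singles out directly a function with the five conditions $u(0)=u'(0)=u(1)=u'(1)=(\hat p u'')'(1)=0$, and then disposes of the sign question at the end. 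The gain of your approach is that it is self-contained "--- it avoids утверждение~\ref{lem:2:1} altogether and does not require separately establishing $\hat r>0$; the paper's approach, by contrast, needs no explicit Wronskian computation and fits more uniformly with the simplicity arguments used elsewhere (cf.\ the proof of утверждение~\ref{prop:1:1}).
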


\begin{proof}
Предположим, что некоторая величина \(\lambda\in\Let(p)\) удовлетворяет требованиям
из формулировки доказываемого утверждения. Зафиксируем отвечающее ей равномерно
положительное решение \(\sigma\in C^1[0,1]\) уравнения \eqref{eq:sturm}, а также
связанные с ним функции \(\hat p\) и \(\hat r\) вида \eqref{eq:hatp},
\eqref{eq:hatr}. Утверждение~\ref{prop:2:1} означает, что функция \(z\in C^2[0,1]\)
принадлежит ядру оператора \(T(\lambda)\), отвечающего случаю граничных условий
\eqref{eq:1:2}, в том и только том случае, когда функция \(y\in C^2[0,1]\)
со свойством \(z(x)\equiv y(t(x))\) является классическим решением граничной
задачи, отвечающей уравнению
\begin{equation}\label{eq:2:100}
	(\hat py'')''-\hat ry=0
\end{equation}
и граничным условиям \eqref{eq:1:2}. Аналогичным образом, функция \(z\in C^2[0,1]\)
принадлежит ядру оператора \(T(\lambda)\), отвечающего случаю граничных условий
\eqref{eq:1:3}, в том и только том случае, когда функция \(y\in C^2[0,1]\)
со свойством \(z(x)\equiv y(t(x))\) является классическим решением граничной
задачи, отвечающей уравнению \eqref{eq:2:100} и граничным условиям 
\begin{equation}\label{eq:2:101}
	y(0)=y'(0)=y'(1)=(\hat py'')'(1)+\lambda\alpha y(1)=0.
\end{equation}

Предположение о нетривиальной разрешимости задачи \eqref{eq:2:100}, \eqref{eq:1:2}
означает равномерную положительность определённой соотношением \eqref{eq:hatr}
функции \(\hat r\). Соответственно, предположение о существовании нетривиальной
функции \(y\in C^2[0,1]\), удовлетворяющей равенству \eqref{eq:2:100} и каждому
из наборов условий \eqref{eq:1:2} и \eqref{eq:2:101}, противоречит утверждению
\ref{lem:2:2}. Иначе говоря, гипотеза из формулировки доказываемого утверждения
могла бы выполняться лишь тогда, когда множество решений граничной задачи,
отвечающей уравнению \eqref{eq:2:100} и граничным условиям
\[
	y(0)=y'(0)=y'(1)=0,
\]
было бы не менее чем двумерно. Однако в таком случае любое решение уравнения
\eqref{eq:2:100} с начальными условиями \(y(0)=y'(0)=0\) должно было бы
удовлетворять равенству \(y'(1)=0\), что противоречит утверждению \ref{lem:2:1}.
\end{proof}

\begin{prop}
Любое собственное значение \(\lambda\in\Let(p)\) пучка \(T\) имеет геометрическую
кратность \(1\).
\end{prop}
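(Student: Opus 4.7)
The plan is to pull the problem back to the model equation $(\hat p y'')'' = \hat r y$ via the change of variable of утверждения \ref{prop:2:1}, and then to run an argument in the same spirit as in the proof of утверждения \ref{prop:2:3}. Concretely, I would fix an eigenvalue $\lambda \in \Let(p)$, choose a uniformly positive $\sigma \in C^1[0,1]$ solving \eqref{eq:sturm}, and form the coefficients $\hat p$, $\hat r$ of \eqref{eq:hatp}, \eqref{eq:hatr}. By утверждению \ref{prop:2:1}, the prescription $z(x) \equiv y(t(x))$ induces a linear isomorphism between $\ker T(\lambda)$ and the space of classical solutions of \eqref{eq:2:100} subject to either \eqref{eq:1:2} or \eqref{eq:2:101}.

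Suppose, for contradiction, that the geometric multiplicity is at least two, and let $W_0$ denote the two-dimensional space of solutions of \eqref{eq:2:100} with $y(0) = y'(0) = 0$. The transformed kernel is contained in $W_0$ and has dimension at least two, so it coincides with $W_0$; in particular, every $y \in W_0$ automatically satisfies the remaining boundary conditions at $x = 1$. Since $\hat p(0) > 0$, the assignment $y \mapsto (y''(0), (\hat p y'')'(0))$ is moreover a linear isomorphism $W_0 \to \mathbb R^2$.

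The first substantive step is to extract uniform positivity of $\hat r$. Since $\hat r = \lambda c r \omega/\sigma$ has constant sign, it suffices to exhibit one nontrivial $y \in W_0$ for which the energy identity $\int_0^1 \hat p (y'')^2\,dt = \int_0^1 \hat r y^2\,dt$ holds. In case \eqref{eq:1:2} any $y \in W_0$ does the job. In case \eqref{eq:2:101} the boundary term $\lambda\alpha y(1)^2$ would otherwise pollute the identity, so one first restricts to the one-dimensional subspace $\{y \in W_0 : y(1) = 0\}$, on which the transformed condition $(\hat p y'')'(1) + \lambda\alpha y(1) = 0$ collapses to $(\hat p y'')'(1) = 0$ and all boundary terms drop out. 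Since such a $y$ cannot be linear, the left-hand side is strictly positive, and hence $\hat r > 0$ throughout $[0,1]$.

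The second step is immediate: pick $y \in W_0$ with $y''(0) = 1$ and $(\hat p y'')'(0) = 0$. Then утверждение \ref{lem:2:1}, applicable at $a = 0$ thanks to the positivity of $\hat r$, gives $y(1) > 0$ and $y'(1) > 0$, contradicting $y(1) = 0$ in case \eqref{eq:1:2} and $y'(1) = 0$ in case \eqref{eq:2:101}. The only delicate point is the first step: утверждение \ref{lem:2:1} demands positivity of the zeroth-order coefficient, and that positivity is not furnished by $\lambda \in \Let(p)$ per se but has to be squeezed out of the assumed two-dimensionality of the kernel, the boundary-mass case \eqref{eq:2:101} requiring the small purification trick above.
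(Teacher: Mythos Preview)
Your argument is correct. The paper's own proof is a single line: the result is declared an immediate consequence of утверждение~\ref{prop:2:3} (a two-dimensional kernel for one set of boundary conditions would produce, by cutting with one extra linear condition at \(x=1\), a nontrivial eigenfunction for the other set, contradicting~\ref{prop:2:3}). What you do instead is inline and slightly streamline the machinery behind~\ref{prop:2:3}: you transform via утверждение~\ref{prop:2:1}, extract \(\hat r>0\) from the energy identity (your ``purification trick'' in case~\eqref{eq:2:101} replacing the paper's appeal to утверждение~\ref{lem:2:2}), and then apply утверждение~\ref{lem:2:1} directly to a well-chosen element of \(W_0\). The ingredients are identical; your organisation avoids comparing the two boundary problems and stays within a single pencil, at the cost of reproducing half of the proof of~\ref{prop:2:3}.
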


Это утверждение немедленно вытекает из утверждения~\ref{prop:2:3}.

\begin{prop}
Пусть \(\lambda\in\Let(p)\) "--- собственное значение пучка \(T\), удовлетворяющее
условиям \(\lambda c>0\) и \(\lambda\alpha\geqslant 0\). Тогда отвечающая ему
собственная функция имеет в точности \(\ind T(\lambda)\) простых нулей на интервале
\((0,1)\).
\end{prop}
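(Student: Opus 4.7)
The plan is to reduce the claim to Proposition~\ref{prop:2:4} applied to a carefully chosen model pencil, by invoking the change of variable from Proposition~\ref{prop:2:1}. The hypotheses \(\lambda c>0\) and \(\lambda\alpha\geqslant 0\) are tailored precisely so that this reduction lands in the positive-coefficient, non-negative-endpoint regime where Propositions~\ref{prop:1:1} and~\ref{prop:2:4} are available.

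Concretely, since \(\lambda\in\Let(p)\), one fixes a uniformly positive solution \(\sigma\in C^1[0,1]\) of \eqref{eq:sturm}, forms the bijection \(V:\mathfrak H\to\mathfrak H\), and applies Proposition~\ref{prop:2:1} to write
\[
    \langle V^*T(\lambda)Vy,z\rangle=\int\limits_0^1\bigl[\hat p\,y''\overline{z''}-\hat r\,y\overline{z}\bigr]\,dt-\lambda\alpha\,y(1)\overline{z(1)}.
\]
By \eqref{eq:hatr} the hypothesis \(\lambda c>0\) makes \(\hat r\) uniformly positive; together with the uniform positivity of \(\hat p\) this identifies the right-hand side with the value at \(\mu=1\) of the model pencil
\[
    \langle\hat S(\mu)y,z\rangle:=\int\limits_0^1\bigl[\hat p\,y''\overline{z''}-\mu\hat r\,y\overline{z}\bigr]\,dt-\mu\lambda\alpha\,y(1)\overline{z(1)},
\]
which falls squarely under Subsection~2.2 with endpoint weight \(\lambda\alpha\geqslant 0\). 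Since \(T(\lambda)z=0\) with \(z=Vy\) is equivalent to \(\hat S(1)y=0\), the value \(\mu=1\) is a positive eigenvalue of \(\hat S\) with eigenfunction \(y\), and Propositions~\ref{prop:1:1} and~\ref{prop:2:4} yield that \(y\) has exactly \(\ind\hat S(1)\) zeros in \((0,1)\), each of them simple.

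It then remains to transfer this back to \(z\) and \(T(\lambda)\). The map \(t:[0,1]\to[0,1]\) is a \(C^1\)-diffeomorphism (since \(\sigma/\omega\) is uniformly positive), so Proposition~\ref{prop:2:1} sets up a bijection between the zeros of \(y\) and those of \(z\); the identity \(z'(x)\equiv y'(t(x))\sigma(x)/\omega\) together with \(\sigma>0\) shows that simplicity is preserved in both directions. Finally, \(V\) is a continuous bijection of \(\mathfrak H\) with continuous inverse (by the open mapping theorem), and the identity \(\langle V^*T(\lambda)Vu,u\rangle=\langle T(\lambda)Vu,Vu\rangle\) shows, through the resulting two-sided norm equivalence, that \(V^{-1}\) carries negative subspaces of \(T(\lambda)\) to negative subspaces of \(\hat S(1)\) and vice versa, whence \(\ind T(\lambda)=\ind\hat S(1)\). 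Combining these observations gives exactly \(\ind T(\lambda)\) simple zeros for the eigenfunction.

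The main technical point to check with care is that the auxiliary pencil \(\hat S\) really meets all the standing assumptions of Subsection~2.2 (continuity and uniform positivity of \(\hat p,\hat r\), which follow from \(p,r\in C[0,1]\) together with the regularity of \(\sigma\), and non-negativity of the endpoint weight, which is precisely \(\lambda\alpha\geqslant 0\)); together with the verification that \(\ind\) is invariant under conjugation by the \(\mathfrak H\)-isomorphism \(V\). The rest of the argument is essentially bookkeeping.
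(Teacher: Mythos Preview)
Your proposal is correct and follows exactly the paper's own route: reduce via the change of variable of Proposition~\ref{prop:2:1} to a model pencil of the type treated in \S\,2.2, apply Proposition~\ref{prop:2:4} (and~\ref{prop:1:1}) there, and transfer back using that \(V\) preserves the number and simplicity of zeros and, being an isomorphism of \(\mathfrak H\), the negative index of inertia. The paper compresses all of this into one sentence, but the content is the same.
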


Это утверждение немедленно вытекает из утверждений \ref{prop:2:1},
\ref{prop:2:4} и очевидного факта сохранения числа и простоты нулей функции
при действии оператора \(V\).


\section{Основные результаты}\label{par:3}
\subsection{}
Ввиду вытекающей из представления \eqref{eq:qf} равномерной положительности
квадратичной формы оператора \(T(0)\), все положительные собственные значения
пучка \(T\) имеют отрицательный тип, а все отрицательные "--- положительный.
Как следует из предложения \cite[Proposition~6]{LSY}, это означает справедливость
следующего утверждения:

\begin{prop}\label{prop:3:1}
Пусть \(\{\lambda_n\}_{n=1}^{\infty}\) "--- последовательность
сосчитанных в порядке возрастания с учётом кратности положительных собственных
значений пучка \(T\), а \(\{\lambda_{-n}\}_{n=1}^{\infty}\) "---
последовательность (возможно, частичная) сосчитанных в порядке убывания
с учётом кратности отрицательных собственных значений того же пучка. Тогда
при \(\lambda\in [0,\lambda_n]\) и \(\lambda\in [\lambda_{-n},0]\)
выполняются неравенства \(\ind T(\lambda)\leqslant n-1\), а при \(\lambda\in
(\lambda_n,+\infty)\) и \(\lambda\in (-\infty,\lambda_{-n})\) "--- неравенства
\(\ind T(\lambda)\geqslant n\).
\end{prop}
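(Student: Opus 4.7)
The plan is to reduce \(\ind T(\lambda)\) to the negative inertia index of a compact self-adjoint perturbation of the identity on \(\mathfrak H\) and then perform a direct count. As in the proof of Proposition~\ref{prop:0}, the definition \eqref{eq:qf} gives the linear-in-\(\lambda\) expansion
\[
  JT(\lambda) = JT(0) + \lambda J(dT/d\lambda),
\]
in which \(A := JT(0)\) is uniformly positive while \(B := J(dT/d\lambda)\) is bounded, self-adjoint and compact. I would then factor
\[
  JT(\lambda) = A^{1/2}\bigl[I-\lambda K\bigr]A^{1/2},\qquad
  K := -A^{-1/2}BA^{-1/2},
\]
which is legitimate because \(A^{1/2}\) is a bounded isomorphism of \(\mathfrak H\); hence \(\ind T(\lambda) = \ind(I-\lambda K)\).

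Next I would invoke the spectral theorem for the compact self-adjoint operator \(K\): its non-zero spectrum is a sequence \(\{\kappa_j\}\) accumulating only at \(0\), and, as was already noted in the proof of Proposition~\ref{prop:0}, \(\mu\neq 0\) is an eigenvalue of \(K\) iff \(1/\mu\) is an eigenvalue of the pencil \(T\) of the same geometric multiplicity. Diagonalizing \(K\) in an orthonormal basis yields the standard identity
\[
  \ind(I-\lambda K) = \#\{\kappa_j : \lambda\kappa_j > 1\},
\]
counted with multiplicity, because \(I-\lambda K\) is uniformly negative precisely on the finite-dimensional span of those eigenvectors of \(K\) whose eigenvalues satisfy this strict inequality (on the kernel of \(K\), the operator \(I-\lambda K\) reduces to the identity and contributes nothing).

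It then remains to translate the count into inequalities on the \(\lambda_n\) and \(\lambda_{-n}\). For \(\lambda>0\) the condition \(\lambda\kappa_j>1\) singles out the positive \(\kappa_j=1/\lambda_k\) with \(\lambda_k<\lambda\), i.e.\ the positive eigenvalues of \(T\) strictly below \(\lambda\); by the ordering of \(\{\lambda_n\}_{n=1}^{\infty}\) there are at most \(n-1\) of these when \(\lambda\leqslant\lambda_n\) and at least \(n\) when \(\lambda>\lambda_n\). The analysis for \(\lambda<0\) is symmetric: \(\lambda\kappa_j>1\) selects the negative eigenvalues of \(T\) strictly above \(\lambda\), and comparing with the ordering of \(\{\lambda_{-n}\}_{n=1}^{\infty}\) gives the required bounds. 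I do not anticipate a serious obstacle; the only delicate point is the treatment of the boundary case \(\lambda=\lambda_n\) (resp.\ \(\lambda_{-n}\)), where the strictness of the inequality in the definition of \(\ind\) is what excludes the corresponding eigenvector \(y_n\) from any uniformly negative subspace and so yields exactly \(\ind T(\lambda_n)\leqslant n-1\).
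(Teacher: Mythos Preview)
Your argument is correct. The paper itself does not actually prove Proposition~\ref{prop:3:1}: it merely notes that the uniform positivity of \(T(0)\) forces the positive eigenvalues of the pencil to be of negative type and the negative ones to be of positive type, and then invokes \cite[Proposition~6]{LSY} as a black box. Your reduction \(\ind T(\lambda)=\ind(I-\lambda K)\) with \(K=-[JT(0)]^{-1/2}J(dT/d\lambda)[JT(0)]^{-1/2}\), followed by the diagonalization and eigenvalue count, is precisely the standard spectral-theoretic mechanism that underlies such variational principles for linear pencils with a positive leading term; in effect you have written out, in this concrete setting, the content that the paper delegates to the reference. The advantage of your route is that it is self-contained and in fact yields the exact formula \(\ind T(\lambda)=\#\{k:\lambda_k<\lambda\}\) for \(\lambda>0\) (and the analogous one for \(\lambda<0\)), which is slightly sharper than the pair of inequalities stated; the advantage of the paper's route is brevity and the link to the general framework of \cite{LSY}.
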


\subsection{}
Имеет место следующий факт:

\begin{theor}\label{prop:3:2}
При любом \(n\geqslant 1\) выполняется соотношение \(\lambda_{-n}\in\Let(p)\).
В случае граничных условий \eqref{eq:1:3} и выполнения неравенств \(c>0\)
и \(\alpha\geqslant 0\) выполняется также соотношение \(\lambda_1\in\Let(p)\).
\end{theor}

\begin{proof}
Принадлежность всех отрицательных собственных значений пучка \(T\) его допустимому
множеству вытекает из тривиального факта положительности наименьшего
собственного значения задачи \eqref{eq:7}, \eqref{eq:8}. Из утверждения
\ref{prop:3:1} вытекает также факт неотрицательности квадратичной формы
\eqref{eq:qf} при \(\lambda=\lambda_1\). Поскольку в случае граничных условий
\eqref{eq:1:3} множество всевозможных производных \(y'\) функций \(y\in\mathfrak H\)
в точности совпадает с пространством \(\Wo_2^1[0,1]\), сказанное означает
положительность квадратичной формы \eqref{eq:6} при \(c>0\), \(\alpha\geqslant 0\)
и \(\lambda=\lambda_1\).
\end{proof}

С учётом утверждения \ref{prop:3:1} и результатов предыдущего параграфа,
простыми следствиями этого факта являются следующие три теоремы:

\begin{theor}
Все отрицательные собственные значения пучка \(T\) являются простыми.
\end{theor}

\begin{theor}
Пусть \(c<0\) и \(\alpha\leqslant 0\). Тогда любая собственная функция
пучка \(T\), отвечающая его \(n\)-му отрицательному собственному значению
\(\lambda_{-n}\), имеет в точности \(n-1\) простых нулей на интервале \((0,1)\).
\end{theor}

\begin{theor}
В случае граничных условий \eqref{eq:1:3} и выполнения неравенств \(c>0\)
и \(\alpha\geqslant 0\) первое положительное собственное значение \(\lambda_1\)
пучка \(T\) является простым и имеет знакопостоянную на интервале \((0,1)\)
собственную функцию.
\end{theor}

\subsection{}
Имеет место следующий факт:

\begin{theor}
Число отрицательных собственных значений пучка \(T\) совпадает с числом
отрицательных собственных значений граничной задачи
\begin{gather*}
	-y''+cry=\lambda y,\\ y(0)=y(1)=0
\end{gather*}
в случае граничных условий \eqref{eq:1:2}, и граничной задачи
\begin{gather*}
	-y''+cry=\lambda y,\\ y(0)=y'(1)+\alpha y(1)=0
\end{gather*}
в случае граничных условий \eqref{eq:1:3}.
\end{theor}

\begin{proof}
Из утверждения \ref{prop:3:1} вытекает, что число отрицательных собственных
значений пучка \(T\) совпадает с величиной \(\lim\limits_{\lambda\to-\infty}
\ind T(\lambda)\). Последняя же, как следует из представления \eqref{eq:qf},
равна отрицательному индексу инерции заданной на пространстве \(\mathfrak H\)
квадратичной формы
\[
	\int\limits_0^1\bigl[|y'|^2+cr\,|y|^2\bigr]\,dx+\alpha\,|y(1)|^2.
\]
Отсюда и из того факта, что пополнением пространства \(\mathfrak H\) по норме
пространства \(W_2^1[0,1]\) является пространство \(\Wo_2^1[0,1]\) в случае
граничных условий \eqref{eq:1:2} и пространство
\[
	\{y\in W_2^1[0,1]\::\: y(0)=0\}
\]
в случае граничных условий \eqref{eq:1:3}, немедленно вытекает искомое.
\end{proof}

\subsection{}
На протяжении оставшейся части статьи через \(\lambda_n\) будут обозначаться
собственные значения пучка, отвечающего граничным условиям \eqref{eq:1:2}, а через
\(\lambda'_n\) "--- собственные значения пучка, отвечающего граничным условиям
\eqref{eq:1:3}.

\begin{theor}
При любом \(n\in\mathbb N\), для которого определены собственные
значения \(\lambda_{-n}\) и \(\lambda'_{-n}\), выполняется неравенство
\(\lambda_{-n}<\lambda'_{-n}\). При любом \(n\in\mathbb N\), для которого
определены собственные значения \(\lambda_{-n}\) и \(\lambda'_{-n-1}\),
выполняется неравенство \(\lambda'_{-n-1}<\lambda_{-n}\). В случае выполнения
неравенств \(c>0\) и \(\alpha\geqslant 0\) выполняется также неравенство
\(\lambda'_1<\lambda_1\).
\end{theor}

\begin{proof}
Согласно утверждению~\ref{prop:3:1}, для любой точки \(\lambda<\lambda_{-n}\)
квадратичная форма \eqref{eq:qf} отрицательна на некотором \(n\)-мерном
подпространстве пространства~\eqref{eq:h1}. Тогда она отрицательна и на некотором
\(n\)-мерном подпространстве более широкого пространства~\eqref{eq:h2},
что означает выполнение неравенства \(\lambda<\lambda'_{-n}\). Ввиду произвольности
выбора точки \(\lambda\), сказанное означает выполнение неравенства \(\lambda_{-n}
\leqslant\lambda'_{-n}\). Аналогичным образом устанавливается справедливость
неравенства \(\lambda'_1\leqslant\lambda_1\).

Далее, для любой точки \(\lambda<\lambda'_{-n-1}\) квадратичная форма
\eqref{eq:qf} отрицательна на некотором \((n+1)\)-мерном подпространстве
пространства~\eqref{eq:h2}. Поскольку пространство \eqref{eq:h1} имеет
в \eqref{eq:h2} коразмерность \(1\), то квадратичная форма \eqref{eq:qf}
отрицательна на некотором \(n\)-мерном подпространстве пространства~\eqref{eq:h2},
что означает выполнение неравенства \(\lambda<\lambda_{-n}\). Ввиду произвольности
выбора точки \(\lambda\), сказанное означает выполнение неравенства
\(\lambda'_{-n-1}\leqslant\lambda_{-n}\).

Для завершения доказательства остаётся обратиться к теореме \ref{prop:3:2}
и утверждению \ref{prop:2:3}.
\end{proof}


\end{document}